\newtheorem*{thm*}{Theorem}
\newtheorem{thm}{Theorem}[section]
\newtheorem{conjecture}[thm]{Conjecture}
\newtheorem*{prop*}{Proposition}
\newcommand\ex{\ensuremath{\mathrm{ex}}}
\newcommand\cN{{\mathcal N}}
\newcommand{\ignore}[1]{}
\title{Paths are Tur\'an-good}
\author{Dániel Gerbner\\\small Alfr\'ed R\'enyi Institute of Mathematics\\
\small \texttt{gerbner.daniel@renyi.hu}}
\date{}
\begin{document}

\maketitle

\begin{abstract}
We show that among $K_{k+1}$-free $n$-vertex graphs, the Tur\'an graph contains the most copies of any path.
\end{abstract}




\section{Introduction}

A fundamental theorem of extremal graph theory is due to Tur\'an \cite{T}, who showed that among $n$-vertex $K_{k+1}$-free graphs, the most edges are contained in the 
Tur\'an graph $T(n,k)$, which is the complete $k$-partite graph with each part of order $\lfloor n/k\rfloor$ or $\lceil n/k\rceil$. In general, the most number of edges is $n$-vertex $F$-free graphs is denoted by $\ex(n,F)$. Simonovits \cite{sim} proved that if $F$ has a color-critical edge, i.e., and edge whose deletion decreases the chromatic number of $F$, then for sufficiently large $n$ we have $\ex(n,F)=|E(T(n,k))|$.

In so-called \textit{generalized Tur\'an problems}, we count copies of a subgraph $H$ instead of edges. Let $\cN(H,G)$ denote the number of copies of $H$ in $G$. We denote by $\ex(n,H,F)$ the largest $\cN(H,G)$ among $n$-vertex $F$-free graphs $G$. The first result concerning $\ex(n,H,F)$ with $H\neq K_2$ is due to Zykov \cite{zykov}, who showed that $\ex(n,K_r,K_{k+1})=\cN(K_r,T(n,k))$.

One of the most studied question in this topic is the following: When is the Tur\'an graph extremal? It was systematically studied first by Gy\H ori, Pach and Simonovits \cite{gypl}. Gerbner and Palmer \cite{gp3} introduced the following definition. Given a graph $F$ with $\chi(F)=k+1$, we say that $H$ is \textit{$F$-Tur\'an-good} if $\ex(n,H,F)=\cN(H,T(n,k))$ for sufficiently large $n$. They made the following conjecture.

\begin{conjecture} \label{conji}
Each path is $K_{k+1}$-Tur\'an-good for every $k$.
\end{conjecture}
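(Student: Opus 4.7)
The plan is to combine an asymptotic upper bound with a stability-based cleaning argument to show that for sufficiently large $n$ the extremal graph is exactly $T(n,k)$. We may assume $k\ge 2$ and $\ell\ge 2$, since the other cases are trivial. First, I would establish the asymptotic bound
\[\ex(n,P_\ell,K_{k+1}) \le (1+o(1))\,\cN(P_\ell,T(n,k)),\]
which follows from a standard counting argument combined with the Erd\H{o}s--Stone--Simonovits theorem: since $\chi(P_\ell)=2\le k$, each ordered copy of $P_\ell$ can be bounded by a product of $\ell-1$ edges worth of contribution, and the extremal edge count of a $K_{k+1}$-free graph differs from $|E(T(n,k))|$ only by $o(n^2)$.

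Second, I would upgrade this to a stability statement: any $K_{k+1}$-free graph $G$ on $n$ vertices with $\cN(P_\ell,G) \ge (1-\epsilon)\,\cN(P_\ell,T(n,k))$ admits a partition $V(G) = V_1 \cup \cdots \cup V_k$ with $|V_i| = (1\pm o(1))n/k$ and only $o(n^2)$ ``bad'' pairs, meaning edges inside a part or non-edges across parts. Such a stability version is obtained by feeding the Erd\H{o}s--Simonovits stability theorem into the path-counting argument of the previous step, which is by now a standard reduction for Tur\'an-good type problems.

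The main step is the local cleaning argument on an extremal graph $G$. Fix the partition guaranteed by stability. For each vertex $v$ I would compare $\cN(P_\ell,G)$ with the count obtained after moving $v$ between parts: if $v$ sits in $V_i$ but has many edges inside $V_i$ and/or many non-edges to some other part $V_j$, a direct vertex-exchange computation would show that $v$'s contribution is not maximized, contradicting extremality. This rules out vertices of large ``bad degree''. One then handles the remaining inside edges: for an edge $uv$ inside some $V_i$, $K_{k+1}$-freeness forces $u$ and $v$ to share many non-neighbors in each other part, and careful path counting through $uv$ versus through the forced non-edges shows that deleting $uv$ strictly increases $\cN(P_\ell,G)$. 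Iterating gives that $G$ is $k$-partite; adding all missing cross edges preserves $K_{k+1}$-freeness and cannot decrease $\cN(P_\ell,\cdot)$; finally, rebalancing the parts to sizes $\lfloor n/k\rfloor$ or $\lceil n/k\rceil$ strictly increases the path count by a convexity argument, forcing $G=T(n,k)$.

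The main obstacle will be the vertex-exchange step and the subsequent removal of inside edges. Precisely quantifying how $\cN(P_\ell,G)$ changes when $v$ is moved, or when an inside edge is deleted, requires tracking the local contribution of $v$ (or of $uv$) as its position inside the path ranges over all of $\{1,\dots,\ell\}$ (or $\{1,\dots,\ell-1\}$). The bookkeeping is subtler than in Zykov's clique case because paths have interior vertices of distinct ``types'', and the exchange argument must be carried out with enough care that secondary error terms coming from the $o(n^2)$ bad pairs do not swamp the main gain/loss. The surrounding framework, however, is standard for Tur\'an-good results, so the novelty should lie precisely in this path-specific calculation.
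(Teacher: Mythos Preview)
Your overall framework---asymptotics, stability, clean the extremal graph down to a complete $k$-partite graph, then balance the parts---is correct and is also the shape of the paper's argument. But you have the emphasis inverted, and the step you treat as routine is the one that actually carries the new content.

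The paper does not perform the vertex-exchange cleaning you outline. It quotes a theorem of Hei, Hou and Liu which says: given stability (which those same authors already proved for paths), it suffices to show that among all complete $k$-partite $n$-vertex graphs the Tur\'an graph maximizes $\cN(P_\ell,\cdot)$. So your ``main obstacle'' is already in the literature and the paper simply cites it.

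Conversely, the ``convexity argument'' you invoke for the rebalancing is precisely the paper's contribution, and it is not a one-liner. For $k=2$ the count of $P_\ell$ in $K_{a,b}$ can indeed be handled directly. But for $k\ge 3$ the natural move is to balance two parts $A_1,A_2$ while freezing the rest, and a copy of $P_\ell$ intersects $A_1\cup A_2$ in an arbitrary \emph{linear forest} $H'$, not in a path; the number of extensions of $H'$ to a full $P_\ell$ using the frozen parts does not depend on how $A_1,A_2$ are split. One is therefore forced to prove the $k=2$ maximization for every linear forest simultaneously. The paper does this by induction on $|V(H')|$: if some component of $H'$ has an odd number of vertices, delete one of its endpoints; otherwise delete the last two vertices of an even component together with the edge between them. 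In each case the number of ways to extend the smaller forest back to $H'$ is the same in every $K_{a,b}$ with $a+b$ fixed, so the induction closes. Your proposal does not contain this reduction, and without it the balancing step for $k\ge 3$ is a genuine gap rather than routine convexity.
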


We denote by $P_\ell$ the path on $\ell$ vertices. We say that $H$ is a \textit{linear forest} if each connected component of $H$ is a path.
Results from \cite{gypl} imply that each path is $K_3$-Tur\'an-good and $P_3$ is $K_{k+1}$-Tur\'an-good for every $k$. Gerbner and Palmer \cite{gp3} showed that $P_3$ is $F$-Tur\'an-good for any $F$ with a color-critical edge and $\chi(F)>3$. This was extended by Gerbner \cite{gerb} to the case $\chi(F)=3$. He also showed that $P_4$ is $K_4$-Tur\'an-good and matchings are $K_{k+1}$-Tur\'an-good. Results of Gerbner \cite{gerbner2} imply that each path is $F$-Tur\'an-good for a large class of 3-chromatic graphs with color-critical edges, including odd cycles. Murphy and Nir \cite{mn} showed that $P_4$ is $K_{k+1}$-Tur\'an-good. Qian, Xie and Ge \cite{qxg} showed that $P_5$ is $K_{k+1}$-Tur\'an-good. Hei, Hou and Liu \cite{hhl} came close to prove Conjecture \ref{conji}. They showed that $P_\ell$ for $\ell\le 6$ is $F$-Tur\'an-good for any $F$ with a color-critical edge and any path is  $K_{k+1}$-Tur\'an-good for large enough $k$. Moreover, they reduced the problem in the remaining cases to complete multipartite graphs, as we will describe in Section 2.

We complete the proof of Conjecture \ref{conji}. We prove the following more general theorem.
 
\begin{thm}\label{main}
Let $F$ be a graph with a color-critical edge and $H$ be a linear forest. Then $H$ is $F$-Tur\'an-good.
\end{thm}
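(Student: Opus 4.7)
The plan is to combine the reduction of Hei, Hou, and Liu (described in Section 2) with a Schur-concavity argument on complete multipartite graphs. By their reduction, it suffices to show that among complete $k$-partite graphs $K(n_1, \ldots, n_k)$ on $n$ vertices, the balanced Tur\'an graph $T(n,k)$ maximizes $\cN(H, \cdot)$. Since this count is a polynomial symmetric in $(n_1, \ldots, n_k)$, the claim amounts to Schur-concavity on the simplex $\sum_i n_i = n$; by a standard exchange argument, this reduces to showing that whenever $n_1 \geq n_2 + 2$, replacing $(n_1, n_2)$ by $(n_1 - 1, n_2 + 1)$ weakly increases $\cN(H, K)$.

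I would write the number of injective homomorphisms from $H$ into $K = K(n_1, \ldots, n_k)$ as
\[
\cN^*(H, K) \;=\; \sum_{\psi} \prod_{i=1}^{k} n_i^{\underline{m_i(\psi)}},
\]
where $\psi$ ranges over proper colorings $\psi \colon V(H) \to [k]$, $m_i(\psi) = |\psi^{-1}(i)|$, and $n^{\underline{m}} = n(n-1)\cdots(n-m+1)$ is the falling factorial; dividing by $|\Aut(H)|$ gives $\cN(H, K)$. Pairing each coloring $\psi$ with its swap $\bar\psi$ (exchanging colors $1$ and $2$) partitions the colorings into orbits, so the effect of the shift $(n_1, n_2) \mapsto (n_1 - 1, n_2 + 1)$ on $\cN^*(H, K)$ can be computed orbit by orbit.

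The main obstacle is that an individual orbit contribution of the form $\bigl(n_1^{\underline{a}} n_2^{\underline{b}} + n_1^{\underline{b}} n_2^{\underline{a}}\bigr)\,C_\psi$ need not be Schur-concave when $a$ and $b$ differ significantly --- for instance, when $a = 0$ and $b \geq 2$ the expression is in fact Schur-\emph{convex}. To handle this, I would exploit the structure of $H$ as a linear forest: along each path component, proper colorings have a simple recursive description (essentially a walk in $K_k$ without backtracking on the preceding edge), and one can explicitly construct an injection from orbits with unbalanced profiles $(a,b)$ into orbits with more balanced profiles by locally re-coloring a carefully chosen vertex on a single path component. The fact that each component is a path (rather than an arbitrary tree) is what makes this modification canonical, via a linear traversal of the component. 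Summing such injections over orbits should show that the Schur-convex contributions are dominated by the Schur-concave ones, yielding the required inequality and hence the theorem. An alternative route --- worth trying if the direct injection turns out to be fragile --- is to decompose $\cN^*(H, K)$ over ordered partitions of its vertex set into the individual path components of $H$, reducing the linear-forest problem to the single-path case plus a bookkeeping step for disjointness.
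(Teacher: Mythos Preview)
Your reduction via Hei--Hou--Liu and the two-part shift $(n_1,n_2)\mapsto(n_1-1,n_2+1)$ matches the paper's strategy, which likewise reduces the optimization over complete $k$-partite graphs to the bipartite case $k=2$. The gap is in that bipartite case. You correctly observe that individual swap-orbits of proper $2$-colorings need not be Schur-concave --- for $H=P_3\cup P_3$ the orbit with profile $\{(4,2),(2,4)\}$ contributes $n_1^{\underline 4}n_2^{\underline 2}+n_1^{\underline 2}n_2^{\underline 4}$, which is larger at $(n_1,n_2)=(4,2)$ than at $(3,3)$ --- and then propose to repair this by an injection from unbalanced to balanced orbits via a local re-coloring along a path. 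But this step is only asserted, not carried out, and it is not clear it can be: every swap-orbit has the same cardinality (two colorings, or one if fixed), so an unweighted injection among colorings says nothing about the differing falling-factorial weights; what you would actually need is a weighted domination statement, and none is supplied. Your own hedging (``should show'', ``if the direct injection turns out to be fragile'') reflects this. The alternative route you mention, decomposing over path components, runs into the same falling-factorial entanglement when enforcing vertex-disjointness.

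The paper bypasses the orbit analysis entirely with a short induction on $|V(H)|$ for $k=2$. If $H$ has an odd component $P_{2i+1}$, delete one endpoint to obtain $H_1$: in any $K_{a,b}$, every vertex outside a fixed copy of $H_1$ is adjacent to exactly one endpoint of each $P_{2i}$ component of $H_1$, so the number of extensions of that copy of $H_1$ to a copy of $H$ depends only on $n$, and induction finishes. If every component is even, peel off the terminal edge of some $P_{2i}$: the count of $H$ then factors as (edges of $K_{a,b}$) times (copies of the smaller forest in $K_{a-1,b-1}$), and each factor is separately maximized at the balanced bipartition. This replaces your proposed injection with two one-line observations.
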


\section{Proof}

The main result of Hei, Hou and Liu \cite{hhl} is the following theorem.

\begin{thm}[Hei, Hou and Liu \cite{hhl}]
Let $F$ be a graph with a color-critical edge and $H$ be a graph with $\chi(H)<\chi(F)$. Let us assume that there is an $n$
-vertex $F$-free graph $G$ with $\ex(n,H,F)=\cN(H,G)$ that can be turned to a complete $k$-partite graph by adding and deleting $o(n^2)$ edges. If every complete $k$-partite graph $T$ has $\cN(H,T)\le \cN(H,T(n,k))$, then $H$ is $F$-Tur\'an-good.
\end{thm}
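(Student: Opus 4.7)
My approach is a Simonovits-style stability argument. Set $\chi(F)=k+1$; since $\chi(T(n,k))=k<\chi(F)$, the graph $T(n,k)$ is $F$-free, giving $\ex(n,H,F)\ge\cN(H,T(n,k))$, so it suffices to prove $\cN(H,G)\le\cN(H,T(n,k))$.

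Let $T$ be the complete $k$-partite graph with parts $V_1,\dots,V_k$ close to $G$, so $|E(G)\triangle E(T)|=o(n^2)$. Each edge lies in $O(n^{v(H)-2})$ copies of $H$, so $|\cN(H,G)-\cN(H,T)|=o(n^{v(H)})$. Combined with $\cN(H,G)\ge\cN(H,T(n,k))=\Theta(n^{v(H)})$, this forces $\cN(H,T)=\Theta(n^{v(H)})$; an analysis of the multinomial expansion of $\cN(H,K_{n_1,\dots,n_k})$ then implies, after possibly absorbing sub-linear parts into larger ones (a perturbation that preserves the $o(n^2)$ closeness), that every $|V_i|=\Omega(n)$.

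The heart of the proof is to show that $G$ has no edge inside any $V_i$; then $G\subseteq K_{|V_1|,\dots,|V_k|}$, and the multipartite hypothesis finishes the argument via $\cN(H,G)\le\cN(H,K_{|V_1|,\dots,|V_k|})\le\cN(H,T(n,k))$. Here is where the color-critical edge enters. Let $e=xy$ be a color-critical edge of $F$, write $F'=F-e$, so $\chi(F')=k$; in every proper $k$-coloring of $F'$ the vertices $x,y$ lie in the same color class (otherwise $F$ itself would be $k$-colorable), so fix one such coloring $c$. Suppose for contradiction that $uv\in E(G)$ with $u,v\in V_i$. I aim to find an embedding of $F'$ in $G$ realizing $x\mapsto u,\ y\mapsto v$ and sending the other color classes of $c$ injectively into distinct parts $V_j$ with $j\ne i$. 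The number of such embeddings in $T$ is $\Omega(n^{v(F)-2})$ because every $V_j$ is linear, while each edge in $E(G)\triangle E(T)$ participates in at most $O(n^{v(F)-4})$ of them (once $u,v$ and the two endpoints of the edge are fixed), so at most $o(n^{v(F)-2})$ embeddings are spoiled. A surviving embedding together with $uv$ gives a copy of $F$ in $G$, contradicting $F$-freeness.

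The main technical obstacle is this embedding count when vertices are atypical: if $u$ or $v$ is missing abnormally many cross-part neighbors, the dominant term above can collapse. A preliminary cleanup deleting the $o(n)$ vertices of atypical cross-degree, at a cost of at most $o(n^{v(H)})$ copies of $H$, restores the uniform lower bound on the embedding count and lets the color-critical step run on the cleaned graph; one then argues that the deleted vertices cannot be reinserted with in-part edges without reintroducing a copy of $F$, so the original $G$ was already $k$-partite on $V_1,\dots,V_k$. Everything else is bookkeeping, and the desired inequality $\cN(H,G)\le\cN(H,T(n,k))$ follows at once from the multipartite hypothesis.
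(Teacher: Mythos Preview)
The paper does not prove this statement: it is quoted as the main theorem of Hei, Hou and Liu \cite{hhl} and invoked as a black box, with only the remark that \cite{hhl} stated it under some extra hypotheses that are not actually used. There is therefore no in-paper proof to compare your attempt against.

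For what it is worth, your outline is the standard Simonovits-type stability argument and is almost certainly the shape of the proof in \cite{hhl}: pass to a nearby complete $k$-partite template, use the colour-critical edge to forbid edges inside a part via an embedding count for $F'=F-e$, and finish with the multipartite optimisation hypothesis. As written, however, it is a plan rather than a proof. Two places are genuinely loose. First, the claim that a symmetric-difference edge meets at most $O(n^{v(F)-4})$ of the fixed-$(u,v)$ embeddings is false when that edge is incident to $u$ or $v$; you acknowledge this, but the ``delete atypical vertices, then reinsert'' paragraph is a promissory note, not an argument (one must reassign each deleted vertex to the part in which it has few neighbours and rerun the count). Second, ``absorb sub-linear parts into larger ones'' is underspecified: if absorption leaves fewer than $k$ nonempty parts, $F'$ (with $\chi(F')=k$) no longer embeds into the template and the colour-critical step cannot run; a separate argument is needed to rule this out for an extremal $G$, and the multinomial expansion of $\cN(H,K_{n_1,\dots,n_k})$ alone does not give it when $\chi(H)<k$.
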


We remark that they stated a weaker theorem, adding additional conditions that hold for paths but are not actually used in the proof. Even in the weaker form, the theorem implies that a stability result accompanied with optimization on complete multipartite graphs is enough to complete the proof. They also proved the stability result for paths. In fact, they proved the following more general (and more usual) stability: if $G$ is an $n$
-vertex $F$-free graph with $\cN(P_\ell,G)=\ex(n,P_\ell,F)-o(n^\ell)$, then $G$ can be turned to a complete $k$-partite graph by adding and deleting $o(n^2)$ edges.

First we extend the stability result to linear forests. It follows from a result of Gerbner \cite{ge}. He showed that if stability holds for two graphs in the above sense, then the same holds for their vertex-disjoint union (in fact, he proved a stronger result: it is enough if one of the two graphs have the stability, and the other has the correct asymptotics). Therefore, Theorem \ref{main} follows from the next theorem.

\begin{thm}
Let $H$ be a linear forest. Then for any complete $k$-partite graph $G$ we have $\cN(H,G)\le \cN(H,T(n,k))$.
\end{thm}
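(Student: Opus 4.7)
The plan is a smoothing argument: I will show that whenever $G=K_{n_1,\ldots,n_k}$ has two parts whose sizes differ by at least $2$, replacing those sizes by two that differ by at most $1$ (keeping the same total) cannot decrease $\cN(H,G)$. Iterating over pairs drives the sequence $(n_1,\ldots,n_k)$ to the Tur\'an partition.

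Counting labeled embeddings by their ``part pattern,'' I write
\[
|\Aut(H)|\cdot\cN(H,G)=\sum_{f}\prod_{i=1}^{k}(n_i)_{m_i(f)},
\]
where $f$ ranges over proper $k$-colorings $V(H)\to[k]$, $m_i(f)=|f^{-1}(i)|$, and $(x)_m=x(x-1)\cdots(x-m+1)$ is the falling factorial. Fix $n_3,\ldots,n_k$ and $s=n_1+n_2$, and group the colorings $f$ by $S=f^{-1}(\{1,2\})\subseteq V(H)$ and by the restriction $f|_{V(H)\setminus S}$ (a proper coloring of $H-S$ in the colors $\{3,\ldots,k\}$). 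This gives
\[
|\Aut(H)|\cdot\cN(H,G)=\sum_{S\subseteq V(H)}c_S\,\phi_S(n_1,n_2),
\]
with $c_S\ge 0$ independent of $(n_1,n_2)$ and
\[
\phi_S(n_1,n_2)=\sum_{g}(n_1)_{|g^{-1}(1)|}(n_2)_{|g^{-1}(2)|},
\]
where $g$ runs over proper $2$-colorings $S\to\{1,2\}$ of $H[S]$. It therefore suffices to show that each $\phi_S$, with $n_1+n_2=s$ fixed, is maximized when $|n_1-n_2|\le 1$.

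Here the linear-forest hypothesis does the work. Since $H[S]$ is itself a linear forest, each of its components $C_j$ is a path (possibly a single vertex) with a unique bipartition, say of sizes $x_j\ge y_j$. A proper $2$-coloring of $H[S]$ amounts to choosing, for each $C_j$, which of its two bipartition classes receives color $1$. Setting $e=\sum_j y_j$, and letting $c_o$ and $c_e$ denote the numbers of components whose order is odd and even respectively, a direct count yields
\[
\phi_S(n_1,n_2)=2^{c_e}\sum_{t=0}^{c_o}\binom{c_o}{t}(n_1)_{e+t}(n_2)_{e+c_o-t}.
\]
The identity $(n_i)_{e+t}=(n_i)_e(n_i-e)_t$ together with the Vandermonde identity for falling factorials, $\sum_t\binom{N}{t}(a)_t(b)_{N-t}=(a+b)_N$, collapses the inner sum to
\[
\phi_S(n_1,n_2)=2^{c_e}\,(n_1)_e(n_2)_e\,(n_1+n_2-2e)_{c_o}.
\]
With $n_1+n_2$ fixed the last factor is constant, so $\phi_S$ is proportional to $(n_1)_e(n_2)_e$; an elementary ratio computation (showing $(x{+}1)_e(s{-}x{-}1)_e\le(x)_e(s{-}x)_e$ for $x\ge s/2$) confirms that $(n_1)_e(n_2)_e$ is maximized precisely when $|n_1-n_2|\le 1$.

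The main obstacle is the Vandermonde collapse. A priori $\phi_S$ is a sum of monomials $(n_1)_a(n_2)_b$, and mixed sums of such monomials need not be Schur-concave (for instance $n_1^2+n_2^2$ with $n_1+n_2$ fixed is \emph{minimized} at balance, not maximized). The linear-forest hypothesis is exactly what forces every summand in $\phi_S$ to share a common base $(n_1)_e(n_2)_e$, after which Vandermonde folds the remaining sum into a constant and the Schur-concavity needed for smoothing becomes elementary.
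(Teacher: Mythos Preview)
Your argument is correct and shares the paper's outer smoothing framework: both balance two parts at a time and reduce to the linear forest $H[S]$ induced on those two parts (your $\phi_S$ is exactly the paper's count of $H'$ inside $A_1\cup A_2$). Where you diverge is in the two-part core. The paper handles $k=2$ by induction on $|V(H)|$, peeling off an endpoint of an odd component or the last edge of an even component and observing that the number of extensions depends only on $n$ and not on the split. You instead compute $\phi_S$ in closed form via the falling-factorial Vandermonde identity, obtaining $\phi_S(n_1,n_2)=2^{c_e}(n_1)_e(n_2)_e(s-2e)_{c_o}$, from which the maximization at $|n_1-n_2|\le 1$ is immediate. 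Your route is more algebraic and makes transparent \emph{why} balance wins---the only variable factor is the symmetric product $(n_1)_e(n_2)_e$---while the paper's induction is more combinatorial and hides this structure; on the other hand, the inductive peel-off may adapt more readily to variants where no clean closed form is available.
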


\begin{proof}
First we prove the case $k=2$. We use induction on $|V(H)|$, the base cases $|V(H)|\le 2$ are trivial. Let us assume that the statement holds for smaller linear forests and prove it for $H$. Let $T$ be an $n$-vertex complete bipartite graph.

Assume first that $H$ has a component $K=P_{2i+1}$ on odd vertices. Then we let $H_1$ be the graph obtained by deleting an endpoint of $K$. By induction, the number of copies of $H_1$ is maximized by the Tur\'an graph. Clearly all the vertices of $T$ not in a copy of $H_1$ extend that $H_1$ to $H$ by adding them to the end of a $P_{2i}$ component of $H_1$, and each vertex is connected to exactly one endpoint of such a component. Let $H_1$ contain $x$ such components, then there are $x(n-|V(H_1)|)$ ways to extend $H_1$ to $H$. Each copy of $H$ is obtained the same number of times this way, thus the number of copies of $H$ is also maximized by the Tur\'an graph.

Assume now that $H$ has a component $K'=P_{2i}$. Then we let $H_2$ be the graph obtained by deleting the last edge of $K'$. By induction, the number of copies of $H_2$ is maximized by the Tur\'an graph, and so does the number of edges. We count copies of $H$ in $T$ by taking an edge $uv$, and a copy of $H_2$ on the remaining vertices, and then adding $uv$ to the end of a $P_{2i-2}$ component. The first two factors are maximized by the Tur\'an graph. Clearly, $u$ is adjacent to one endpoint of the $P_{2i-2}$ and $v$ is connected to the other endpoint, thus there are two ways to unite the $P_{2i-2}$ and $uv$ to a $P_i$ component. Each copy of $H$ is obtained the same number of times this way, completing the proof in the case $k=2$.

Let us continue with the case $k>2$.
Let $G$ be an $n$-vertex $F$-free complete $k$-partite graph with parts $A_1,\dots, A_k$. Let us assume that $|A_1|<|A_2|-1$. Then we move $\lfloor (|A_2|-|A_1|/2\rfloor$ vertices from $A_2$ to $A_1$ to obtain $G'$. We will show that the number of copies of $H$ does not decrease this way. Clearly, after repeating this enough times with all the parts, we obtain the Tur\'an graph.

A copy of $H$ intersects $A_1\cup A_2$ in a linear forest $H'$, and intersects $V(G)\setminus (A_1\cup A_2)$ in another linear forest $H''$. Observe that if $H'$ extends a copy of $H'$ inside $A_1\cup A_2$ to $H$, then the same way it extends any other copy of $H'$ inside $A_1\cup A_2$ to $H$. Therefore, it is enough to show that for any subforest $H'$ of $H$, the number of copies of $H'$ does not decrease when turning $G$ to $G'$. Observe that we turned $G[A_1\cup A_2]$ to $T(|A_1\cup A_2|,2)$, hence what we want is exactly the case $k=2$. We have already dealt with that case, thus we are done.
\end{proof}

\bigskip

\textbf{Funding}: Research supported by the National Research, Development and Innovation Office - NKFIH under the grants KH 130371, SNN 129364, FK 132060, and KKP-133819.

\end{document}